\title{Spectral accuracy for the Hahn polynomials}
\author{ René Goertz and Philipp Öffner}
\theoremstyle{plain}
\newtheorem{Definition}{Definition}[section]
\newtheorem{Satz}{Theorem}[section]
\newtheorem{Bemerkung}{Remark}[section]
\NewDocumentCommand{\mat}{mo}{%
  \IfValueTF{#2}{%
    \underline{\underline{#1}}{#2}
  }{%
    \underline{\underline{#1}}\,
  }%
}
\renewcommand{\d}{\operatorname{d}}
\renewcommand{\L}{\mathbf{L}}
\renewcommand{\bf}{\textbf}
\renewcommand{\r}{\right}
\renewcommand{\l}{\left}
\renewcommand{\epsilon}{\varepsilon}
\renewcommand{\phi}{\varphi}
\newcommand{\N}{\mathbb{N}}
\newcommand{\R}{\mathbb{R}}
\begin{document}

\maketitle

\begin{abstract}

We consider in this paper the Hahn polynomials and their application in numerical methods. The Hahn polynomials are classical discrete orthogonal polynomials.
We analyse the behaviour of these polynomials in the context of spectral approximation of partial differential equations. 
We study series expansions $u=\sum_{n=0}^\infty \hat{u}_n \phi_n$, where the $\phi_n$  are the Hahn polynomials. 
We examine the Hahn coefficients and proof spectral accuracy in some sense. We substantiate our results by numericals tests. 
Furthermore we discuss a problem which arise by using the Hahn polynomials in the approximation of a function $u$, which is linked to the Runge phenomenon.
 We suggest two approaches to avoid this problem.
 These will also be the motivation and the outlook of further research in the application of discrete orthogonal polynomials in a spectral method for the numerical solution 
 of hyperbolic conservation laws. 
\end{abstract}

\section{Introduction}
Many numerical methods use series expansions of a function $u$ in  terms of  orthogonal polynomials, see  \cite{Canuto:01, glaubitz2016application, Ortleb:04, Oeffner:01, Schwab:01}
and references therein. For periodic problems the application of trigonometric polynomials is common, 
whereas Legendre, Chebyshev or generally Jacobi polynomials are usually applied for non-periodic problems.
All of these polynomials are  solutions of  singular Sturm Liouville problems and it is shown in  \cite{Canuto:01} that, 
if the functions $\phi_n$ fulfil a singular Sturm Liouville problem spectral convergence can be guaranteed,
i.e. the $n$-coefficient decays faster than every power of $n$ for an analytic function $u$. 
However, if the basis functions $\phi_n$ do not satisfy such a problem, then the coefficients in the expansion of a smooth function 
decay only with algebraic order.\\
The application we have in mind is the numerical solution  of hyperbolic conservation laws
\begin{equation*}
 u_t+f(u)_x=0
\end{equation*}
as they appear in numerical fluid dynamics and many other areas. 
The numerical methods we are dealing
with are the correction procedure via reconstruction (CPR) methods, also known under the name flux reconstruction (FR).
This method unifies several high order methods like the discontinuous Galerkin or spectral difference methods in a common framework,
for details see \cite{ huynh2007flux, huynh2014high,wang2009unifying,  ranocha2016summation},
but here it is sufficient to imagine the replacement of $u(x,t)$ by an expansion $U(x,t)=\sum_{n=0}^\infty \hat{u}_n(t) \phi_n(x)$ 
where the $\phi_n$ belong to certain classes
of orthogonal polynomials. In the approximation method we have to calculate the coefficients $\hat{u}_n(t)$. In literature
\cite{ Oeffner:01, Ortleb:05} two approaches 
can be found. One uses the classical {\bf{projection}}, where the coefficients $\hat{u}_k(t)$ are the Fourier coefficients. 
Therefore one solves 
\begin{equation}\label{Fourierkoeffizient}
 \hat{u}_n(t)=\frac{1}{||\phi_n||^2} \int_{\Omega} h(\xi) u(\xi,t)\phi_n (\xi)\; \d \xi,
\end{equation}
where $\Omega$ is the interval, $h$ is the weight function and $||\cdot||$ is the norm in the function space. In case of the 
Legendre polynomials $\Omega=[-1,1],
\;h(\xi)\equiv1$ and $||\cdot||$ is the usual $\L^2([-1,1])$-norm. \\
In numerics the calculation of the integral of \eqref{Fourierkoeffizient} is accomplished by quadrature, see \cite{Karniadakis:01}. 
One has to select the type of quadrature rule (Gauss- or 
Radau type) and also good  integration points usually the zeros of the basis functions $\phi_n$.\\
The second ansatz to calculate the coefficients   $\hat{u}_n$ is by using the {\bf{interpolation approach}}. 
One has to select ``good" interpolation points $x_j$ and must solve the 
linear equation system 
\begin{equation}\label{Interpolation}
 u(x_j,t)=\mathcal{V}(\hat{u}_n(t))
\end{equation}
with the Vandermonde matrix $\mathcal{V}=(\phi_n(x_j))_{n} $ in every time step. 
The system \eqref{Interpolation} has a unique solution,
if $\mathcal{V}$ is regular. Therefore we need the same numbers of 
interpolation points and basis elements $\phi_n$.
The position of the interpolation points has a massiv effect on the approximation and should also lead 
to good numerical properties of $\mathcal{V}$ like a small condition number. \\
Both approaches to compute the coefficients are not exact. Since we calculate these in every time step,
the numerical error  increases. \\
The idea of using discrete orthogonal polynomials is due to the projection approach. For continuous  
orthogonal polynomials one has to evaluate the integral in \eqref{Fourierkoeffizient}.
The discrete orthogonal polynomials come with a discrete scalar product and hence the integral becomes a sum.
By using discrete orthogonal polynomials we have only to compute this sum and
 the calculation of coefficients is exact. \\
This is only one reason for focussing on discrete orthogonal polynomials. Another is the
construction of discrete filters with the help of difference equations satiesfied by these polynomials. 
It is the same procedure as in \cite{ glaubitz2016application, Oeffner:01, glaubitz2016enhancing} with the difference that instead of using a differential operator 
one has to use a difference operator. 
However, in the context of this manuscript
we focus on the approximation results of the series expansion $\sum_{n=0}^m \hat{u}_n(t) \phi_n(x)$
and show spectral accuracy, if  $\phi_n$ are the Hahn polynomials. \\
The paper is organized as follows. The polynomials under consideration will be defined in 
the second section and some of their properties will be reviewed. 
Our main result  is Theorem \ref{Spactralaccu}, giving the decay of the coefficients of the expansion.
In section \ref{Numerics} we present  numerical test cases. 
Using discrete orthogonal polynomials as interpolants on an equidistant grid leads to  problems
which are equivalent to the Runge phenomenon. 
We present  possible solutions and finally 
conclude our results and give an outlook for further research.

\section{Hahn polynomials and their properties}\label{Polynome}

Here we introduce  the orthogonal polynomials under consideration, in particular we investigate the {\bf{Hahn polynomials}} in this paper. 
These are classical discrete orthogonal polynomials on 
an equidistant grid, which can be seen as the discrete analogue of the Jacobi polynomials. In the literature one may find two different definitions for the Hahn polynomials, for details 
see \cite{Lesky:01, Nikiforov:01}, 
one of which has a long history and already Chebyshev worked  with this definition, see \cite{Chebyshev:01}.
Here, we follow  the definition from \cite{Lesky:01, Olver:2010:NHMF}, 
which is common  nowadays.
\begin{Definition}\label{Hahn}
Let $N\in \N,\; -1<\alpha, \beta \in \R$ and  the intervall $I=[0,N]$ with a $(N+1)$-equidistant grid be given. The 
{\bf{Hahn polynomials}} are then defined as the hypergeometric function\footnote{For the definition of the hypergeometric function, see appendix \ref{Appendix} or \cite{Nikiforov:01}.} 
\begin{equation*}
  Q_n(x;\alpha, \beta,N):= 
  {}_3F_2\l(-n,n+\alpha+\beta+1,-x;\alpha+1,-N; 1  \r), 
\end{equation*}
where $n=0,\;,1\; \cdots,\; N.$
\end{Definition} 
For the sake of brevity we introduce the notation $Q_n(x):= Q_n(x;\alpha, \beta,N)$.\\
\begin{Bemerkung}
The Hahn polynomials are defined on the interval $I=[0,N]$. In the numerical tests  of section \ref{Numerics} we  transform the interval $I$ to $[-1,1]$
to have a comparison with the Legendre polynomials.
For the theoretical investigation we consider the Hahn polynomials on the interval $I$ and analyse the normalized Hahn polynomials $\tilde{Q}_n$ for simplicity. In 
principle the investigation of $Q_n$ on $[-1,1]$ is possible and leads to similar results. 
\end{Bemerkung}

Since we will later  need some well-known properties of the Hahn polynomials, we cite them from  \cite{Lesky:01}.
\begin{itemize}
 \item The Hahn polynomials are orthogonal on $I$ with respect to the inner product 
  \begin{equation}\label{Hahnortogonal}
 \begin{aligned} 
 <Q_n(x);Q_m(x)>_{\omega}:&= \sum\limits_{x=0}^NQ_n(x) Q_m(x) \omega(x)\\
 &=\sum\limits_{x=0}^NQ_n(x) Q_m(x) \binom{\alpha+x}{x} \binom{\beta+N-x}{N-x}\\
 &=\frac{(-1)^n(n+\alpha+\beta+1)_{N+1}(\beta+1)_nn!}{(2n+\alpha+\beta+1)(\alpha+1)_n(-N)_n N!}  \delta_{mn},  
 \end{aligned}
 \end{equation}
 for $m,n\in \N_0$ with $m,n\leq N$, where $\omega$ is the weight function given by
 \begin{equation}\label{Weightfunction}
   \omega(x):=\binom{\alpha+x}{x} \binom{\beta+N-x}{N-x}=\frac{\Gamma(\alpha+1+x) \Gamma(\beta+1+N-x)}{\Gamma(x+1)\Gamma(N+1-x)\Gamma(\alpha+1)\Gamma(\beta+1)}.
 \end{equation}

\item They satisfy the three term recurrence formula
\begin{equation}\label{Rekursionsformelhahn}
 -xQ_n(x)=A_nQ_{n+1}(x)-(A_n+C_n)Q_n(x)+C_nQ_{n-1}(x)
\end{equation}
with
\begin{align*}
A_n&=\frac{(n+\alpha+\beta+1)(n+\alpha+1)(N-n)}{(2n+\alpha+\beta+1)(2n+\alpha+\beta+2)}, \\
C_n&=\frac{n(n+\alpha+\beta+N+1)(n+\beta)}{(2n+\alpha+\beta)(2n+\alpha+\beta+1)}.
\end{align*}
\item The polynomials solve the eigenvalue equation
\begin{equation}\label{Hahndifferenzengleichung}
\lambda_nQ_n(x)=B(x)Q_n(x+1)-[B(x)+D(x)]Q_n(x)+D(x)Q_n(x-1)
\end{equation}
with eigenvalues $\lambda_n=n(n+\alpha+\beta+1)$,\\
$B(x)=(x+\alpha+1)(x-N)$ and $
 D(x)=x(x-\beta-N-1).$
By using the difference operators 
\begin{align*}
 \Delta f(x):=f(x+1)-f(x),\\
 \nabla f(x):=f(x)-f(x-1),
\end{align*}
and their identities
\begin{align}
 \Delta f(x)&=\nabla f(x+1), \label{identitatvor} \\
 \Delta \l[ f(x)g(x)\r]&=f(x)\Delta g(x)+g(x+1)\Delta f(x), \label{identitatgem} \\
 \nabla \l[f(x)g(x) \r]&= f(x-1)\nabla g(x)+g(x)\nabla f(x), \label{identitatgemzwei}
\end{align}
we can reshape equation \eqref{Hahndifferenzengleichung} in the following self-adjoint form
\begin{equation}\label{Hahnselbstadjungiert}
 \Delta [-D(x)\omega(x)\nabla Q_n(x)]+\lambda_n(x) \omega(x) Q_n(x)=0,
\end{equation}
with weight function $\omega$.
 \end{itemize}

 \section{Spectral accuracy}\label{Spectral}
 Spectral accuracy/convergence  means that the $n-$th coefficient in the expansion of a smooth function 
 decays faster to zero than any power of $n$. Spectral convergence is something like the Holy Grail in Numerical Analysis and many
 numerical methods are designed to exploit this type of convergence. 
 
 In this section we analyse the behaviour of the Hahn coefficients.
 It is misleading to speak in this context about spectral convergence, because all coefficients $\hat{u}_n$ are equal to zero for  $n>N$ or not defined.
 
Due  to this fact we however speak  about  spectral accuracy
in the following sense:
Accuracy is called spectral, if an index $n_1$ exists so that the absolute values of the $\hat{u}_n$, $N \geq n > n_1$,
decrease faster than any power of $n$. Although we consider the interval $I=[0,N]$ with an $(N+1)$-equidistant grid. The transformation to any compact intervall $[a,b]$
is possible and follows analogously.

  From  Theorem \ref{Spactralaccu}  spectral accuracy follwos directly for Hahn polynomials. 

\begin{Satz}\label{Spactralaccu}
Let $\alpha, \beta>-1,\;,m,\;N\in \N$ with $m\leq N$, $I=[0,N]$ and $u\in C^{\infty}([-1,N+1])$. 
$\tilde{Q}_n(x,\alpha,\beta,N)$ are the normalized Hahn polynomials of degree $n \leq N$. 
The Hahn projection of $u$ with degree $m$ is given by  
 \begin{equation*}
  P_m u(x)=\sum\limits_{n=0}^m \hat{u}_n \tilde{Q}_n(x),
 \end{equation*}
with the coefficient	\[
                          \hat{u}_n= <\tilde{Q}_n;u>_{\omega}
                         \]
and  weight function $\omega$.
It holds
\begin{equation*}
 |\hat{u}_n|\leq \frac{1}{n^{2k}}  \l(  \sum\limits_{i=0}^N \omega(i) \l( \L^k_{disk}u(i)\r)^2 \r)^\frac{1}{2}
\end{equation*}
for all $k\in \N_0$, where   $\L_{disk}:=\frac{1}{\omega(i)}\Delta[ -D(i)\omega(i)\nabla ] $ is the discrete difference operatator. 
\end{Satz}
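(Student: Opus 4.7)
The strategy mirrors the classical Sobolev-type argument for continuous orthogonal polynomials, with the continuous Sturm--Liouville operator replaced by its discrete counterpart $\L_{disk}$. Dividing the self-adjoint form \eqref{Hahnselbstadjungiert} by $\omega(x)$ gives the eigenvalue equation
\begin{equation*}
\L_{disk}\tilde{Q}_n = -\lambda_n \tilde{Q}_n,\qquad \lambda_n = n(n+\alpha+\beta+1),
\end{equation*}
so that, for $n\geq 1$, the factor $\tilde{Q}_n$ appearing in the definition of $\hat{u}_n$ may be replaced by $-\lambda_n^{-1}\L_{disk}\tilde{Q}_n$ and the operator can then be moved onto $u$.

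The main technical step is to establish discrete self-adjointness
\begin{equation*}
\scp{\L_{disk}\phi}{\psi}_{\omega} = \scp{\phi}{\L_{disk}\psi}_{\omega}.
\end{equation*}
I would obtain this via a discrete Green's identity: substituting the definition of $\L_{disk}$, multiplying by $\omega(i)$, and applying summation by parts based on the product-rule identities \eqref{identitatgem} and \eqref{identitatgemzwei} yields the symmetric bilinear form
\begin{equation*}
\scp{\L_{disk}\phi}{\psi}_{\omega} = \sum_{i=0}^N D(i)\,\omega(i)\,\nabla\phi(i)\,\nabla\psi(i),
\end{equation*}
plus boundary contributions at $i=0$ and $i=N+1$. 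The contribution at $i=0$ vanishes because $D(0)=0$, and the one at $i=N+1$ vanishes because $\omega(N+1)=0$, as is visible from the Gamma-function form in \eqref{Weightfunction}: the factor $\Gamma(N+1-x)$ in the denominator produces a pole at $x=N+1$. This is the step I expect to be the main obstacle: the bookkeeping of the telescoping terms must be done carefully, and it crucially relies on these specific algebraic properties of $D$ and $\omega$.

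With self-adjointness in hand, the rest is routine. Iterating the substitution $k$ times produces
\begin{equation*}
\hat{u}_n \;=\; \frac{(-1)^k}{\lambda_n^{\,k}}\,\scp{\tilde{Q}_n}{\L^k_{disk}u}_{\omega},
\end{equation*}
and the Cauchy--Schwarz inequality in the weighted inner product together with the normalization $\|\tilde{Q}_n\|_\omega = 1$ gives
\begin{equation*}
|\hat{u}_n| \;\leq\; \frac{1}{\lambda_n^{\,k}}\,\biggl(\sum_{i=0}^N \omega(i)\,\bigl(\L^k_{disk}u(i)\bigr)^{2}\biggr)^{1/2}.
\end{equation*}
Finally, since $\lambda_n = n(n+\alpha+\beta+1) \geq n^{2}$ whenever $\alpha+\beta+1 \geq 0$ (and up to an inessential constant otherwise), $\lambda_n^{-k}$ can be majorised by $n^{-2k}$, yielding the claimed estimate. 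The case $k=0$ reduces to a direct application of Cauchy--Schwarz.
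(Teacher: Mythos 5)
Your proposal is correct and follows essentially the same route as the paper: the self-adjoint difference equation \eqref{Hahnselbstadjungiert} plus two summations by parts (with boundary terms killed by $D(0)=0$ and $\omega(N+1)=0$) to shift $\L_{disk}$ from $\tilde{Q}_n$ onto $u$, iterated $k$ times, followed by Cauchy--Schwarz with $\|\tilde{Q}_n\|_{\omega}=1$ and the bound $\lambda_n^{-k}\lesssim n^{-2k}$. The only difference is presentational: you package the summation-by-parts step as a self-adjointness (Green's identity) lemma, and you are in fact slightly more careful than the paper about when $\lambda_n\geq n^2$ actually holds.
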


 \begin{proof}
 We have 
  \begin{equation*}
  \hat{u}_n=<\tilde{Q}_n;u>_{\omega}=
  \sum\limits_{i=0}^N \omega(i)u(i)\tilde{Q}_n(i).
 \end{equation*}
 Using equation \eqref{Hahnselbstadjungiert} we get 
\begin{equation*}
 \hat{u}_n= \sum\limits_{i=0}^N \omega(i)u(i)\tilde{Q}_n(i) =\frac{-1}{ \lambda_n } \sum\limits_{i=0}^N u(i) \Delta \l[-D(i)\omega(i)\nabla \tilde{Q}_n(i) \r],
\end{equation*}
where $\omega(N+1) \equiv 0$ and $D(0)\equiv 0 $. We employ summation by parts 
\begin{equation*}
 \sum\limits_{i=0}^{N} f(i)\Delta g(i) =f(i)g(i)\Bigg|_0^{N+1}-\sum\limits_{i=0}^{N} g(i+1)\Delta f(i),
\end{equation*}
and obtain 
\begin{align*}
 \hat{u}_n&=\frac{-1}{\lambda_n } \sum\limits_{i=0}^N u(i) \Delta \l[-D(i)\omega(i)\nabla \tilde{Q}_n(i) \r]\\
          &= \frac{-1}{ \lambda_n } \Bigg(\underbrace{-u(i)D(i)\omega(i)\nabla \tilde{Q}_n(i)\Bigg|_0^{N+1}}_{=0} -\sum\limits_{i=0}^N \Delta u(i)
          ( -D(i+1)\omega(i+1)\nabla \tilde{Q}_n(i+1) )  \Bigg).
\end{align*}
The identity \eqref{identitatvor} and summation by parts yield  
\begin{align*}
 \hat{u}_n&=\frac{1}{ \lambda_n } \l(\sum\limits_{i=0}^N  \nabla \tilde{Q}_n(i+1) \l( -D(i+1)\omega(i+1) \nabla u\l(i+1\r) \r)   \r)\\
  &= \frac{1}{\lambda_n }  \Bigg( -\tilde{Q}_n(i) D(i)\omega(i) \nabla u(i) \Bigg|_{i=0}^{N+1} -\sum\limits_{i=0}^N \tilde{Q}_n(i) \nabla \l[ -D(i+1)\omega(i+1) \nabla u\l(i+1\r)\r] \Bigg) \\
  &=\frac{-1}{ \lambda_n }  \sum\limits_{i=0}^N \omega(i) \tilde{Q}_n(i) \frac{1}{\omega(i)} \Delta \l[ -D(i)\omega(i) \nabla u(i)\r]\\
 & =\frac{-1}{ \lambda_n}  \l( \sum\limits_{i=0}^N \omega(i) \tilde{Q}_n(i) \underbrace{\frac{1}{\omega(i)} \Delta \l[ -D(i)\omega(i) \nabla u(i)\r]}_{=\L_{disk}} \r).
\end{align*}
Applying this procedure $k$ times it follows
\begin{equation*}
 \hat{u}_n=\frac{(-1)^k}{\lambda_n^k}  \l( \sum\limits_{i=0}^N \omega(i) \tilde{Q}_n(i) \L_{disk}^k u(i)\r).
\end{equation*}
We consider the absolute value  $|\hat{u}_n|$ and use the Schwarz inequality to get
\begin{equation*}
|\hat{u}_n| \leq \frac{1}{ \lambda_n^k} ||\tilde{Q}_n||_{\omega} \l(  \sum\limits_{i=0}^N \omega(i) \l( \L^k_{disk}u(i)\r)^2 \r)^\frac{1}{2}
 \approx \frac{1}{ n^{2k}}  \l(  \sum\limits_{i=0}^N \omega(i) \l( \L^k_{disk}u(i)\r)^2 \r)^\frac{1}{2}.
\end{equation*}
The sum is well-defined and independent of $n$ for fixed $N$ and $u\in C^\infty$.
The decay behaviour of $\hat{u}_n$ is characterized by  $\frac{1}{n^{2k}}$ for $n\leq N$. 
If  $n>N$ then   $\hat{u}_n $ is equal to zero. 
\end{proof}
\begin{Bemerkung}
The value $ \l(  \sum\limits_{i=0}^N \omega(i) \l( \L^k_{disk}u(i)\r)^2 \r)^\frac{1}{2}$ may be problematic, but for any $u \in C^\infty$ it is well-defined and bounded. 
Nevertheless we should be careful with this term, especially for small $N$. In this case the truncated series expansion may not describe the function qualitatively. 
Hence, we will also investigate the error between a function $u$ and the  truncated Hahn expansion of $u$. 
In our investigation \cite{Goertz:01} we analyse the error under the assumption of  the two limit processes $n \to \infty$ and $N\to \infty$.
\end{Bemerkung}

\section{Numerical tests} \label{Numerics}
After presenting our theoretical results we give a short numerical investigation to show that our conclusions are justified.
In our first example we approximate the function $f(x):=\sin \l( \pi\cdot x \r)$ in the intervall $I=[-1,1]$ by a truncated Hahn series. 

We use $N=30$ , $(\alpha,\beta) =(0,0)$ (red), $(0.5,0.5)$ (green), ($5,0)$ (blue) and expand the series up to $m=10$. 
In figure \ref{approximitionerror1} 
we see the truncation error $ f(x)-\sum\limits_{n=0}^{10} \hat{f}_n \tilde{Q}_n(x)$ over the intervall $I$. 
\begin{figure}
 \includegraphics[width=1\textwidth]{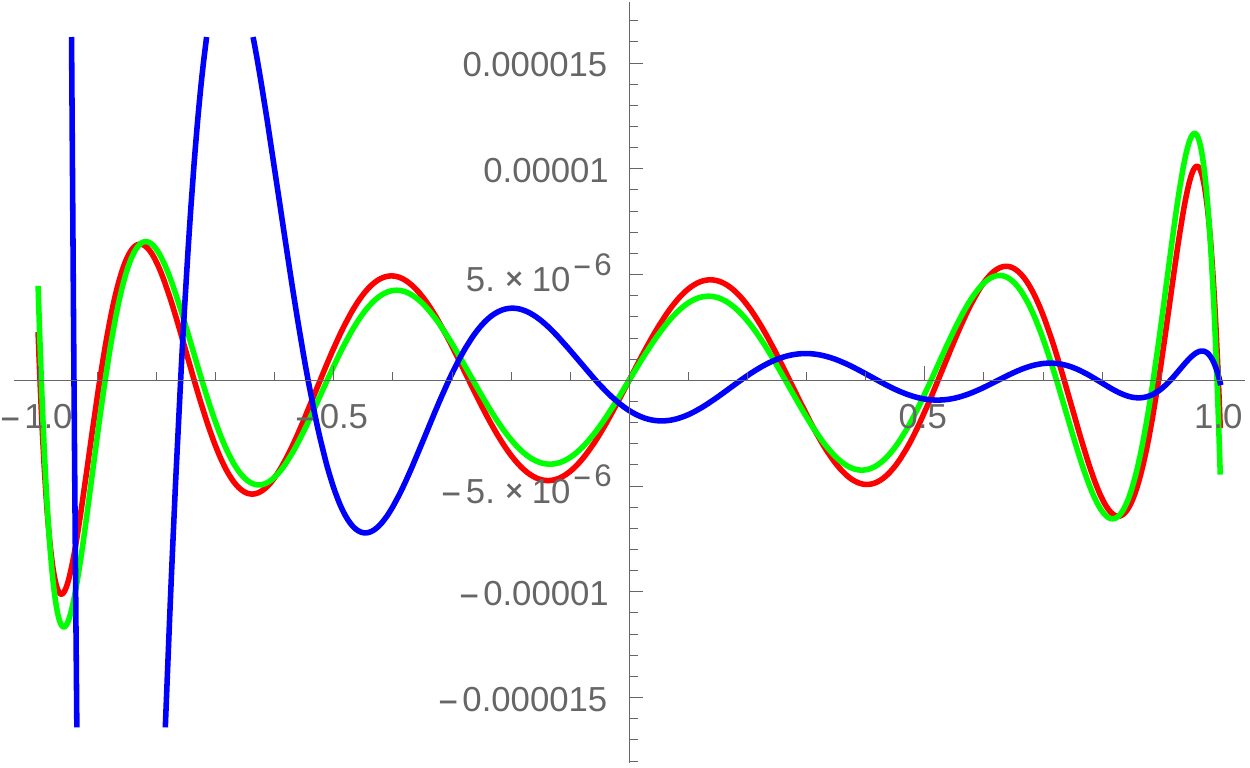}
 \caption{ $N=30, m=10$, approximation error, parameters: $(0, 0)$ (red), $(0.5, 0.5)$ (green), $(5, 0)$ (blue)}\label{approximitionerror1}
\end{figure}
We recognize that for all parameter selections the pointwise error is very small and the truncated series expansions describe the function $f$ fairly good.
This fact is also reflected in the coefficients  $\hat{f}_n$ the absolute values of which are shown in  Figure \ref{approximitioncoefficientsinus1}.
\begin{figure}[ht]
 {\includegraphics[width=0.5\textwidth]{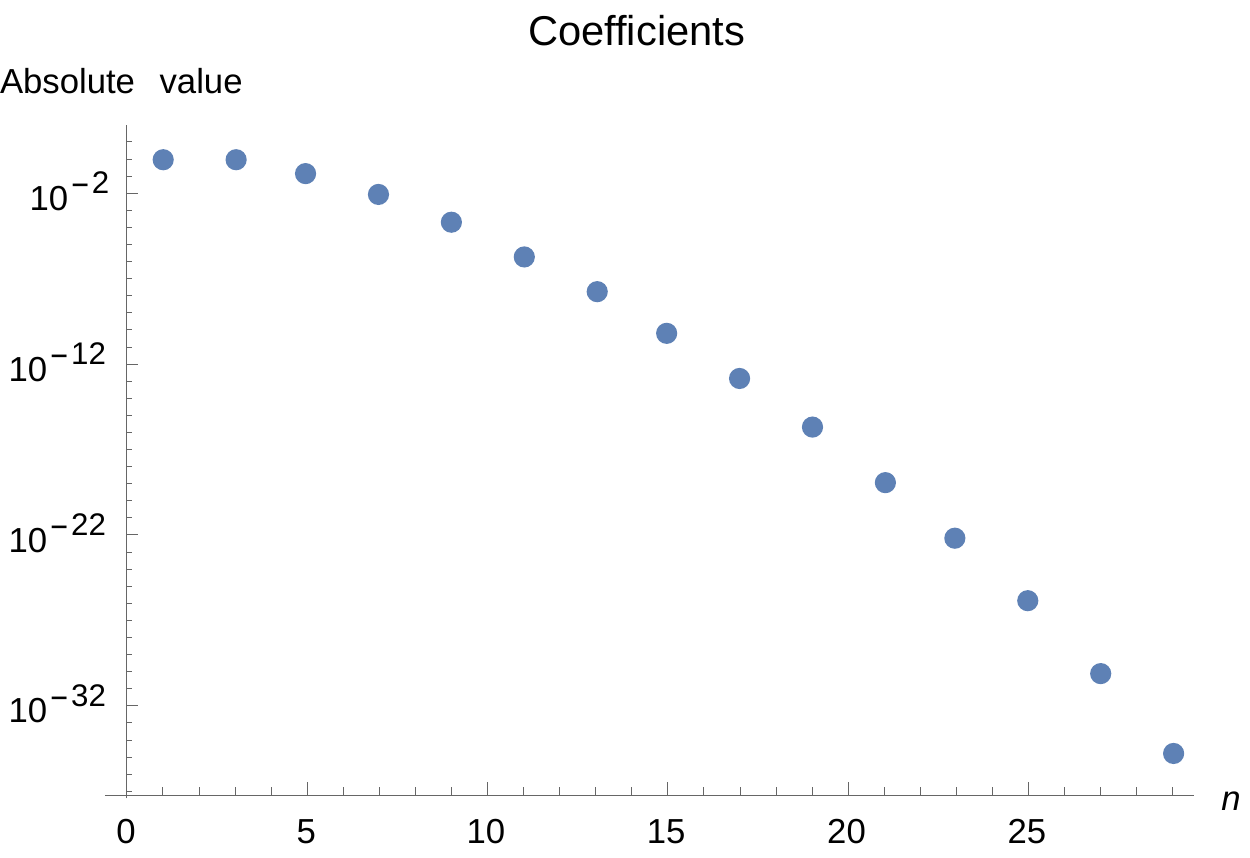}} 
 {\includegraphics[width=0.5\textwidth]{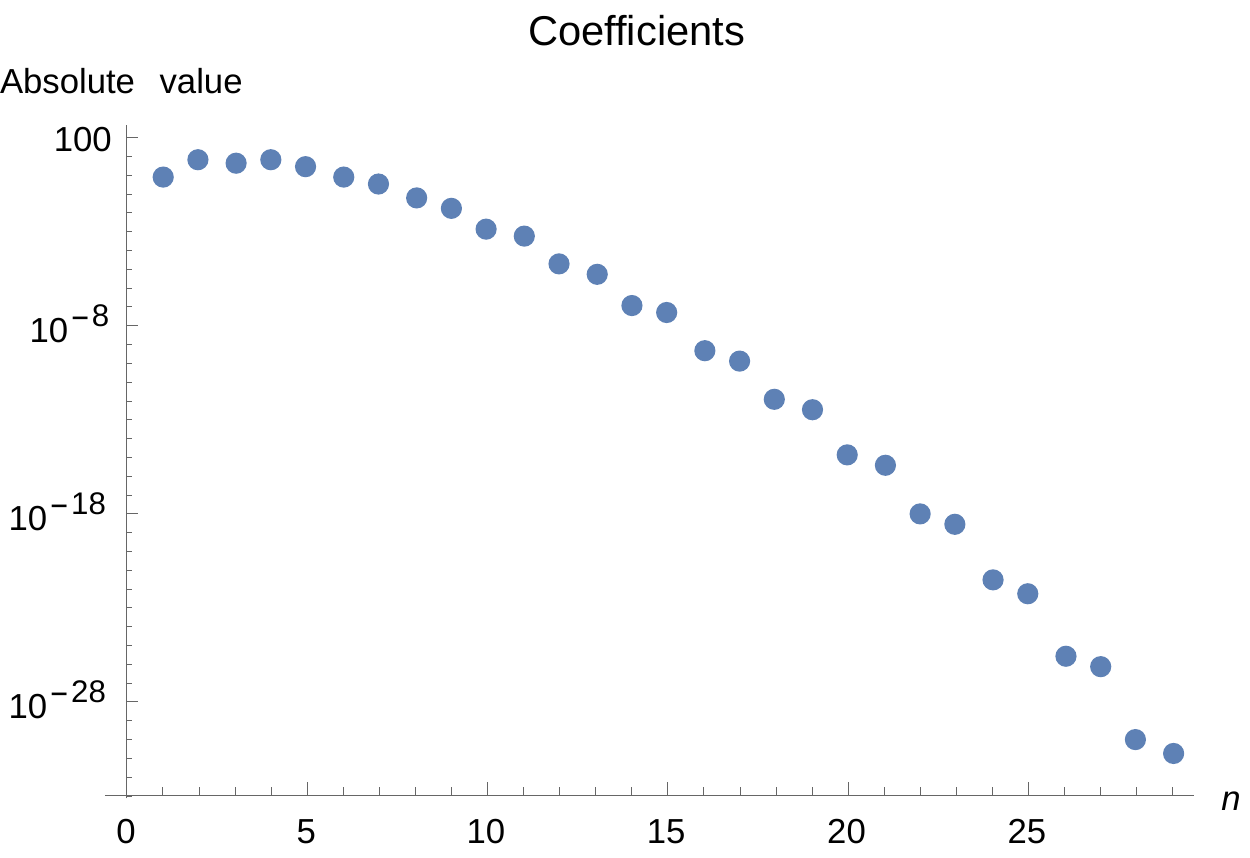}}
 \caption{Absolute values of the coefficients $\hat{f}_n$ }\label{approximitioncoefficientsinus1}
\end{figure}
On the left hand side the Hahn polynomials with  the parameters $(0,0)$ are used and on the right-hand side $(5,0)$.
We realize  an exponential decay in logarithmic scale in both figures. Furthermore, for $(0,0)$  the odd coefficients only appear. 
All even coefficients are  zero and have no influence on the approximation. Because of the symmetry of the sine function
and the special choice of the parameters $(\alpha, \beta)=(0,0)$ this is not suprising. 
We see this same effect in the Legendre coefficients\footnote{We use for approximation the classical Legendre polynomials in the series expansion.},
where all even coefficients are zero, as can be  seen in figure \ref{Legendrecoefficient}. Another interesting fact is the approximation speed being faster by using Hahn polynomials,
compare figures \ref{Legendrecoefficient} and \ref{approximitioncoefficientsinus1}.
\begin{figure}
 {\includegraphics[width=0.5\textwidth]{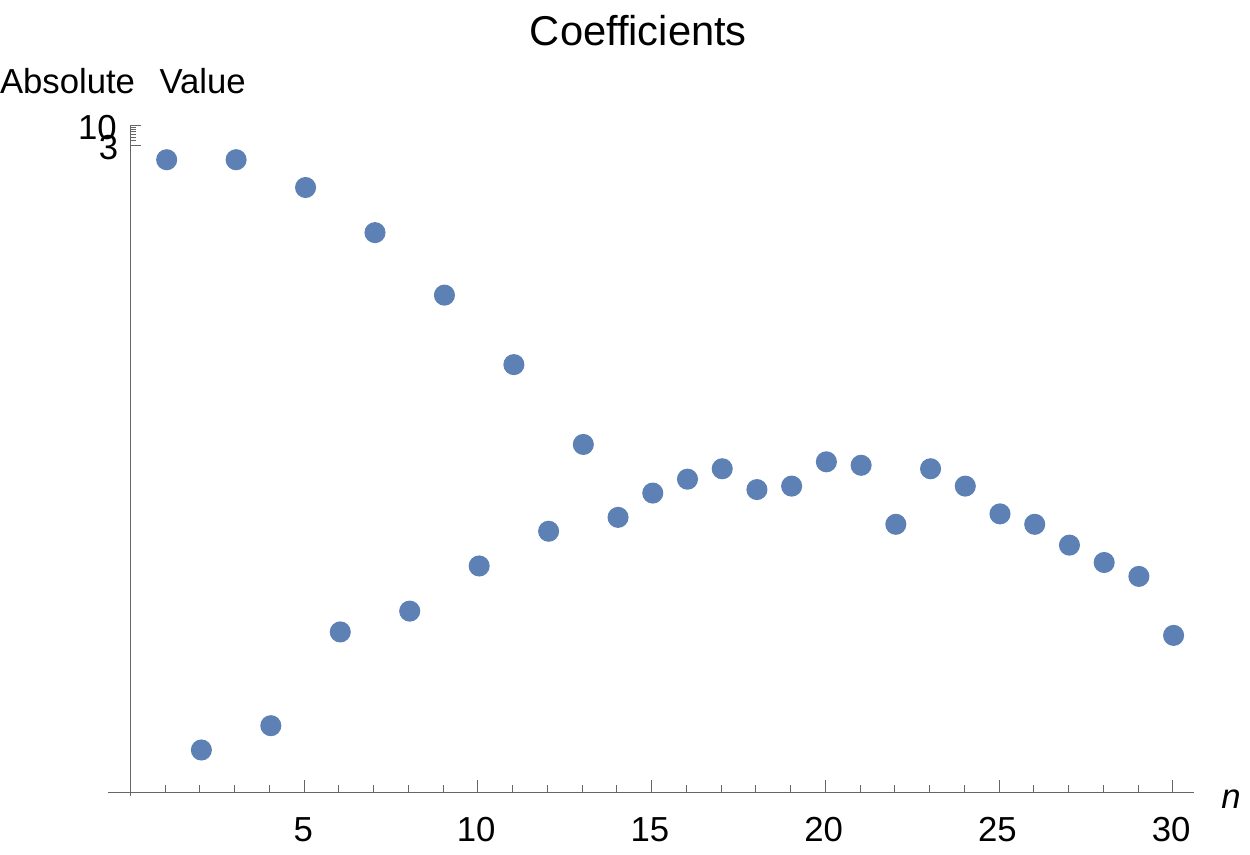}} 
 {\includegraphics[width=0.5\textwidth]{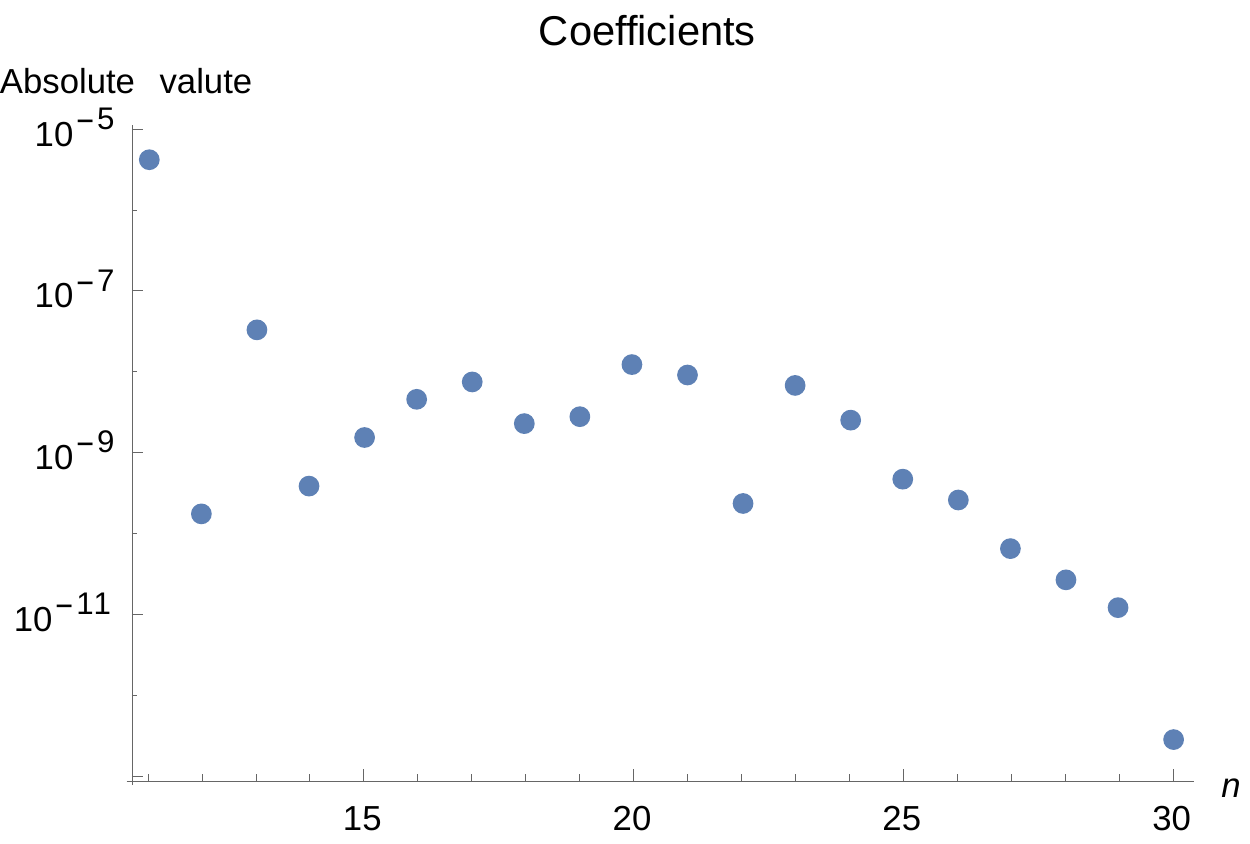}}
 \caption{Absolute values of the coefficients $\hat{f}_n$ }\label{approximitioncoefficientsinus2}
\end{figure}
Figure \ref{approximitioncoefficientsinus2} demonstrates the absolute values of the coefficients in the case  $(0.5,0.5)$.
Taking a closer look on the behaviour of the coefficients, we see that the 
even  coefficients do not play a major role in the approximiaton, but their influence rises with growing $n$ and their absolute value increases.
The  value of the even and odd coefficients is nearly the same around $n=14/15$ and remain up to  $n=23$ at this level which is  between $[10^{-10},10^{-8}]$.
This circumstances can be explained by the fact that the Hahn polynomials of even degree like,  $Q_{2n}(x,\alpha, \beta,N)$, have odd parts.
When $n$ increases also these parts grow and their influence grows. 
However, in  the end we again get spectral accuracy in the coefficients, what was predicted in  Theorem \ref{Spactralaccu}. 
\begin{Bemerkung}
  We note that this special behaviour of the rising influence of the even coefficients is characteristic for using Hahn polynomials with parameters $\alpha, \beta \notin \N_0$
  in approximating the odd sine function. A comparable result can be seen if approximating an even function. Then the influence of the odd coefficients rises at the beginning.
\end{Bemerkung}
\begin{figure}[ht]
 {\includegraphics[width=0.5\textwidth]{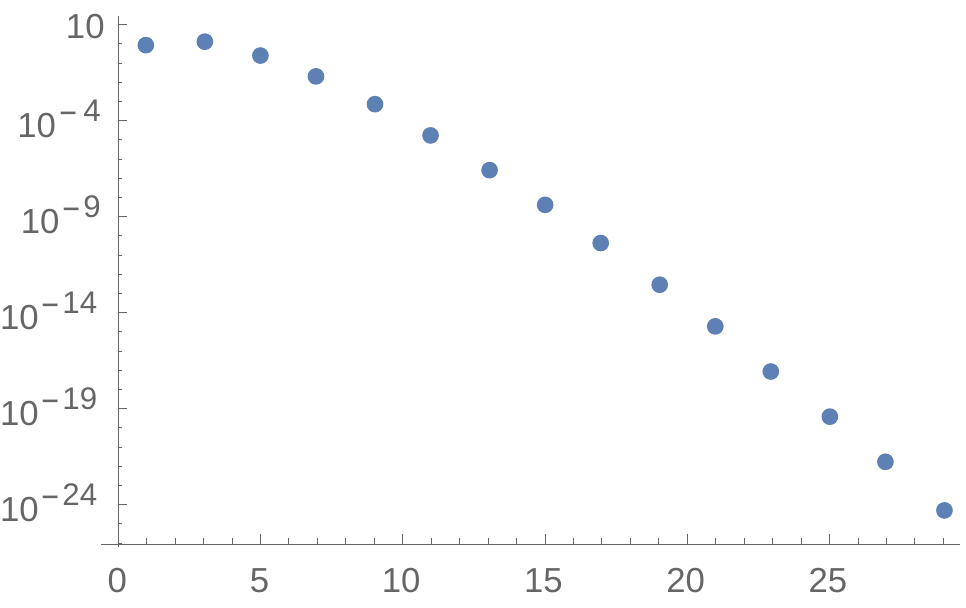}} 
 {\includegraphics[width=0.5\textwidth]{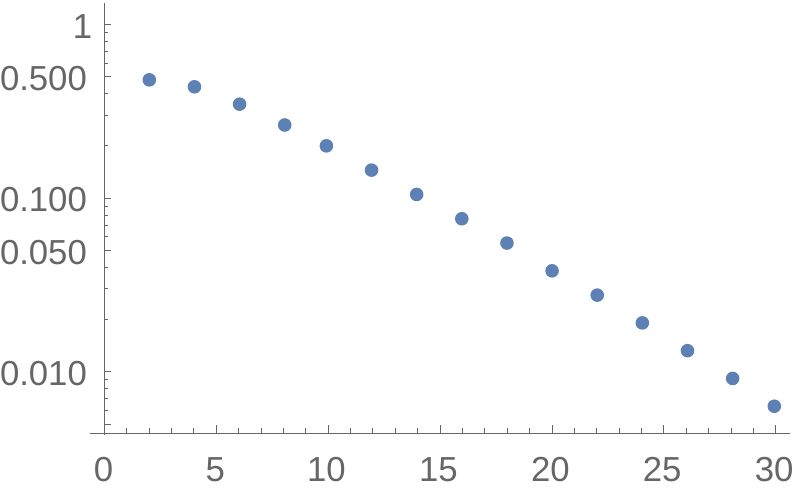}}
 \caption{Absolute values of the coefficients for  $\hat{f}_n$ and $\hat{g}_n$ using Legendre polynomials}\label{Legendrecoefficient}
\end{figure}
In the second example we consider the function  $g(x)=\frac{1}{1+25x^2}$ on the intervall $I=[-1,1]$. 
The absolute value of the Hahn coefficients are plotted in a logarithmic scale. We use  $N=30$ and  the parameters  $(\alpha,\beta)= (0,0),\; (5,0)$ and $(0.5,0.5)$.
\begin{figure}
 {\includegraphics[width=0.5\textwidth]{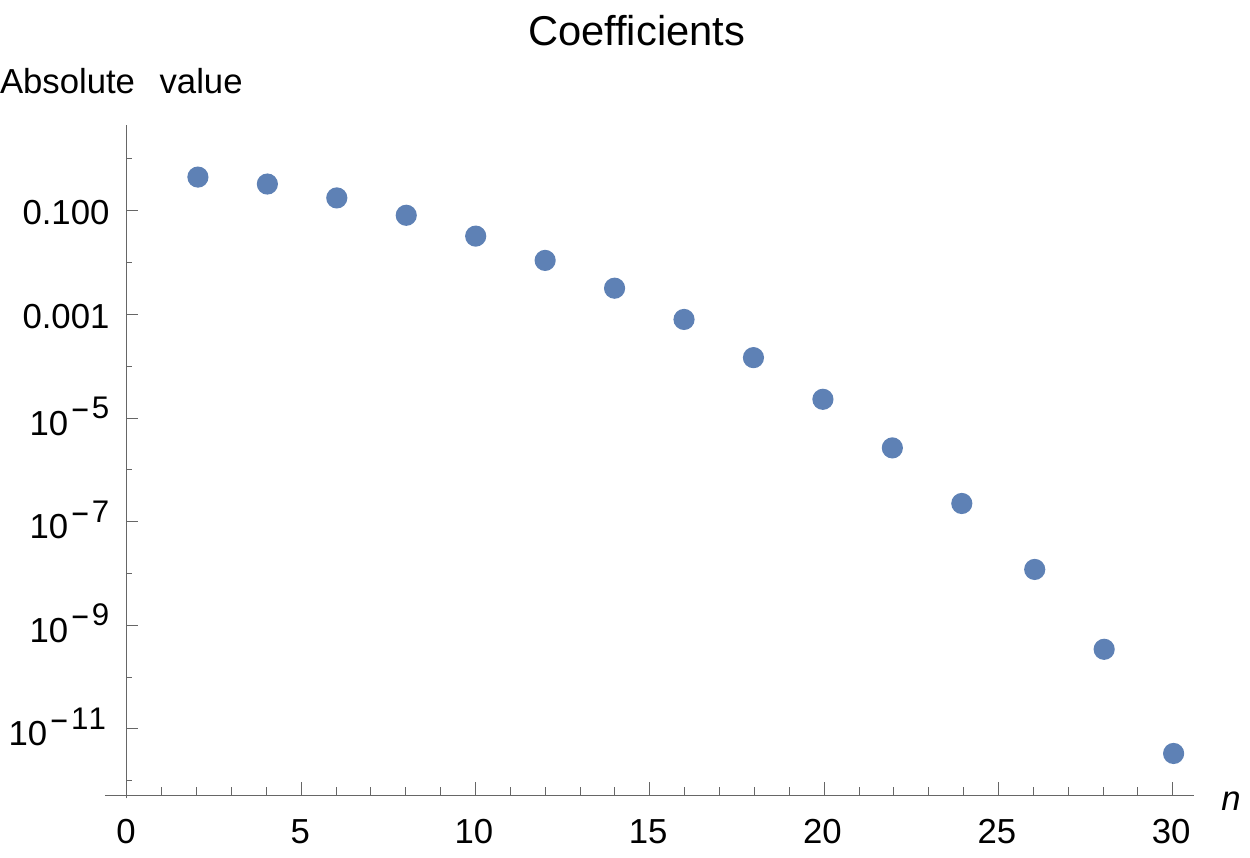}} 
 {\includegraphics[width=0.5\textwidth]{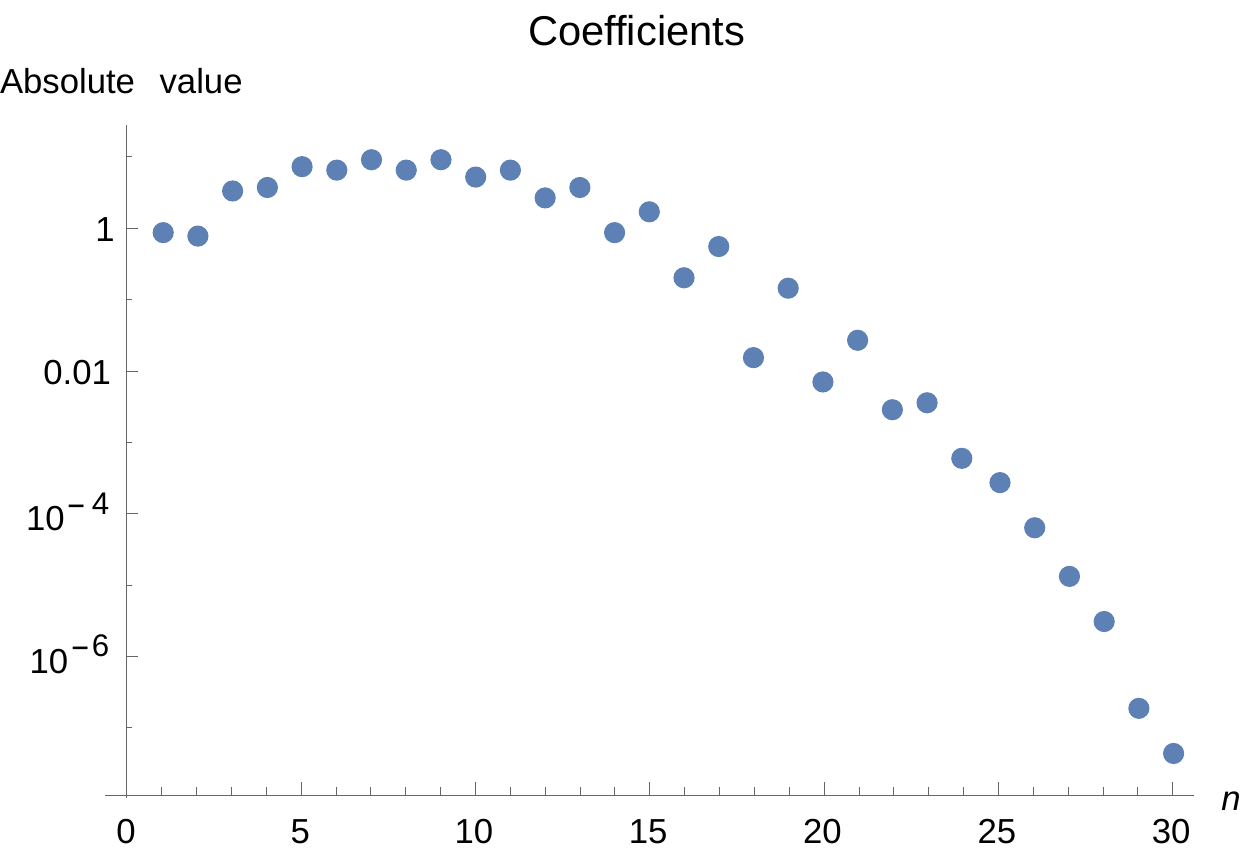}}
 \caption{Absolute values of the coefficients $\hat{g}_n$, parameters $(0,0)$ and $(5,0)$ }\label{approximitioncoefficientrunge}
\end{figure}
In figures \ref{approximitioncoefficientrunge} and  \ref{approximitioncoefficientrunge2} we see the coefficients $\hat{g}_n$ as functions of  the degree $n$. 
We recognize a behaviour, which we have already noticed in the first example. In the case of the parameters  $ (\alpha, \beta)=(0,0)$ 
all odd coefficients are zero and only the even coefficients are needed in the approximation. 
In case of  $(0.5,0.5)$ the absolute value of the odd coefficients increases  due to the fact that the Hahn polynomials of odd degree 
have an even part, and this part grows in $n$.

\begin{figure}
 {\includegraphics[width=0.5\textwidth]{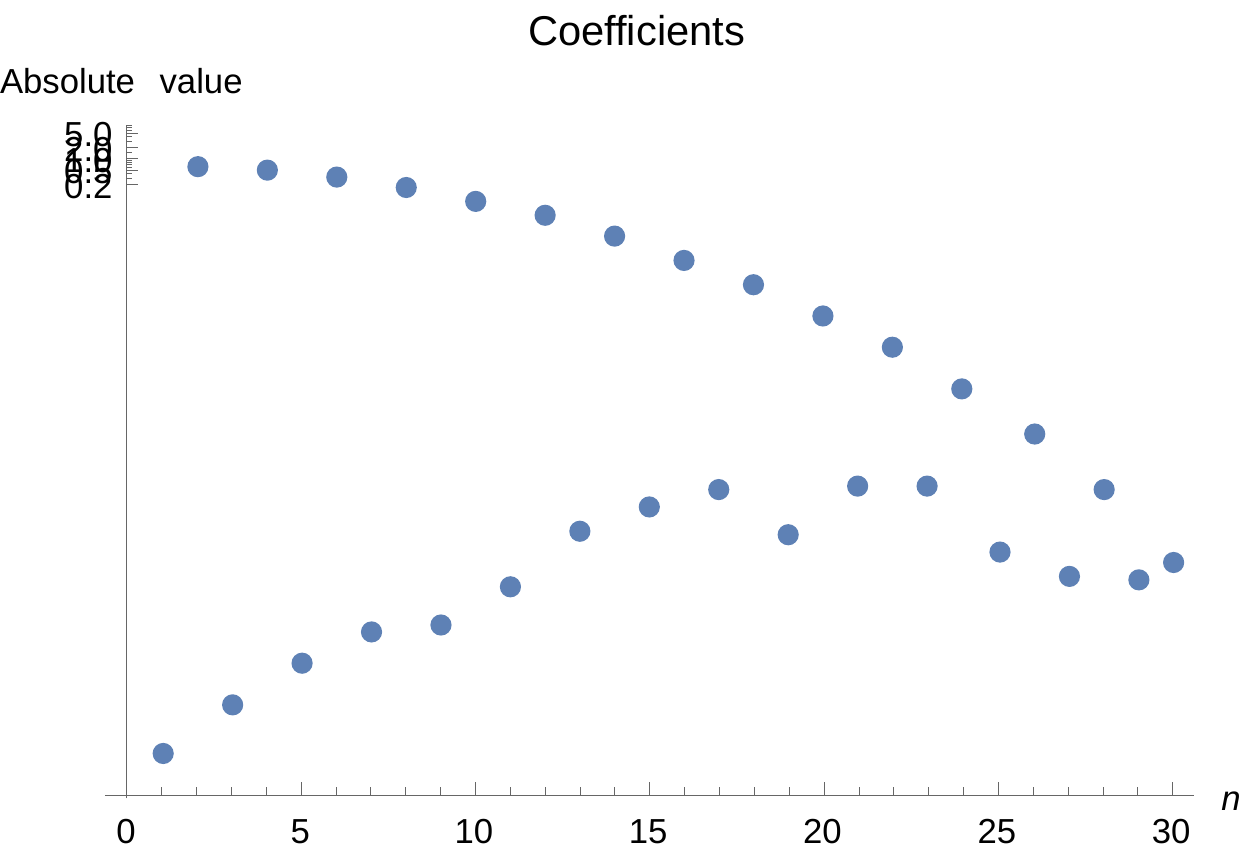}}
  {\includegraphics[width=0.5\textwidth]{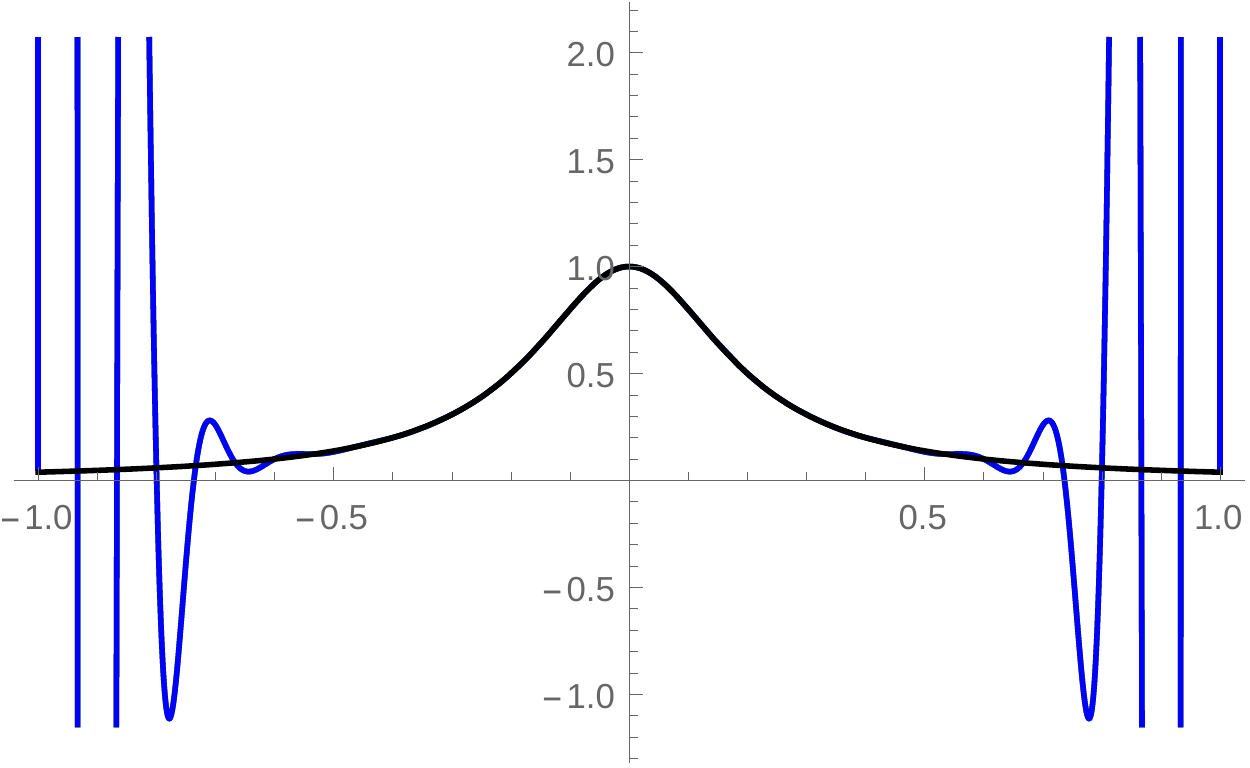}} 
 \caption{Absolute values of the coefficients $\hat{g}_n$ parameters $(0.5,0.5)$, Runge phenomenon}\label{approximitioncoefficientrunge2}
\end{figure}

In all cases we see spectral accuracy in the coefficients even though the absolute values increase in the beginning  in the case $(5,0)$.
Simultaneously we do not get the same level as in the sine example and also we get a  further problem.
It is well-known that approximating  the function $g$ with the usual interpolation polynoms on an equidistant grid yields  the 
Runge phenomenon. If we approximate with $m=30$ our truncated Hahn series becomes the usual  interpolation polynomial 
on equidistant points and we also obtain the Runge phenomenon, see figure \ref{approximitioncoefficientrunge2}, where the black line marks the original function $g$.  
In figure \ref{runge} we plot  the truncated Hahn series with $m=10$ for the parameters $(\alpha,\beta) =(0,0)$ (red), $(0.5,0.5)$ (green), ($5,0)$ (blue).
In the right figure we demonstrate the pointwise  approximation error. We conclude that all truncated series expansions do not 
describe the original function and the quality of the approximation also depends on the parameter selection  $\alpha, \beta$.
The Runge phenomenon has an influence on the approximation and we have to take this fact into  consideration.
\begin{figure}
 {\includegraphics[width=0.5\textwidth]{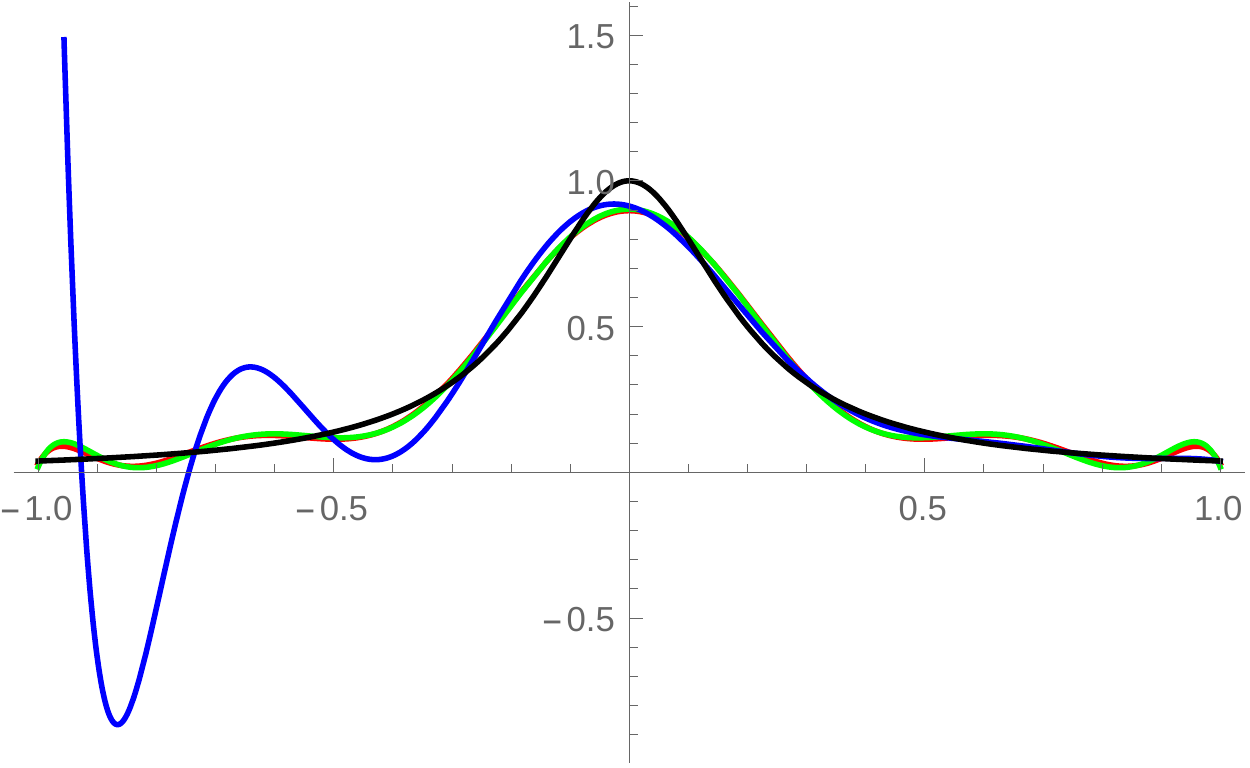}}
  {\includegraphics[width=0.5\textwidth]{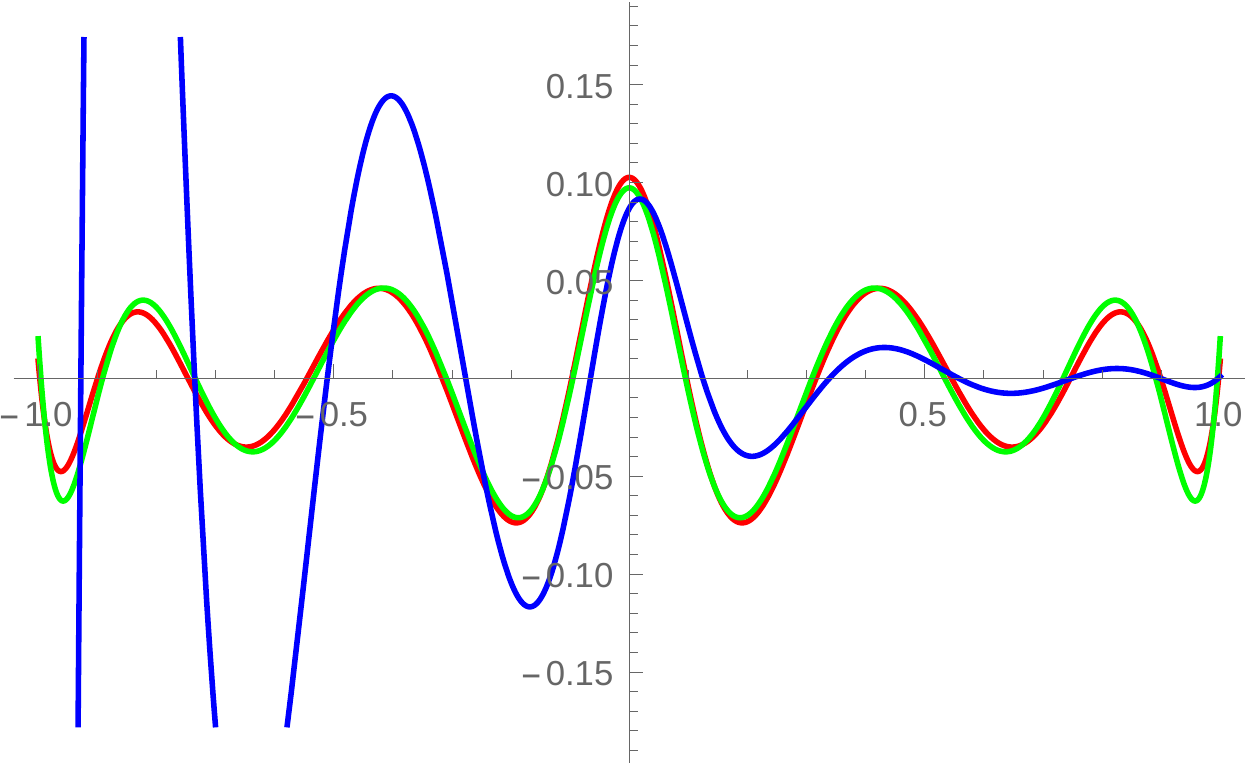}}
 \caption{Truncated series with $m=10$, approximation error }\label{runge}
\end{figure}

\section{Discussion and an outlook}
Applying the Hahn polynomials in a spectral method for the numerical solution of conservation laws we will not commit
any numerical calculation errors by using the projection approach, but we may have also to deal with 
the Runge phenomenon. We can not ensure that our approximated solution really describes the correct one and the
question  is now: How can we fix this problem?\\ 
In \cite{Gelb:03} the authors use discrete orthogonal polynomials for approximation and solving differential equations. 
Their  basic idea is to split the domain and to approximate  not on equidistant points near the boundary, instead they use Chebyshev nodes to avoid the Runge phenomenon.

In \cite{Eisinberg:01} the authors construct  discrete orthogonal polynomials via a quadrature rule from Chebyshev nodes up to a particular, but fixed degree.
These are not classical discrete orthogonal polynomials and therefore some basic properties  do not exist  or are not known like the satisfaction of an eigenvalue equation.
Nevertheless one should analyse their approximation properties and also apply them in a numerical method to solve differential equations. 
We will consider this in a further paper.\\
Here our focus was on the Hahn polynomials and their approximation properties, because they have all the basic properties which we will need later 
to construct a discrete filter. Furthermore we can extend these polynomials similar to the Jacobi polynomials in the continuous
case to triangular meshes. Many numerical methods use triangulations to split the domain into different elements. In the area of computational fluid dynamics triangular grids  
are optimal for the description of complex geometries. This is one reason why we are interested in an extension of  the Hahn polynomials on triangles, and  
this was already done by Xu, see \cite{Xu:03, Xu:02}. \\
First of all, we have to deal with the problem of the Runge phenomenon, when using the truncated Hahn series. Therefore,
we have to understand the approximation properties of the series
and the pointwise error in particular. We already started to investigate the behaviour of the pointwise truncation error in \cite{Goertz:01} and  
estimated the pointwise error of the truncated Hahn series with respect to the pointwise error of the truncated Jacobi series.
The occurrence of the Runge phenomenon is already studied in-depth, see \cite{trefethen2013approximation}, but nevertheless, we are also 
looking for  additional and further conditions on the function $u$ to  ensure that  the Runge phenomenon has no influence on the approximation.\\
The last idea is motivated by the Gegenbauer reconstruction in  \cite{Shu:01, GOTTLIEB199281}. 
The basic idea of the Gegenbauer reconstruction is to develop the numerical solution once more in a series expansion 
to delete the Gibbs' phenomenom. The authors investigate the ratio between the order  $m$ of the Gegenbauer polynomials and the truncation index $N$ of the series.
They  prove some convergence results, for details see \cite{Shu:01}. In \cite{Boyd:01} the author showed that singularities off the real axis can destroy convergence
if $m,N$ tend simultaneously to infinity. The Gegenbauer reconstruction must therefore be constrained to use a sufficiently small ratio of order $m$ to truncation $N$.
In \cite{Gelb:02} the reconstruction procedure is further investigated using another orthogonal basis of the Freud polynomials.
We will transfer their ideas to our problem and will analyse  the ratio between the $N$ Hahn polynomials and truncation $m$, which are finally used in the series expansion,
to minimize the total error. 

\section{Appendix:}\label{Appendix}
Let $s,r\in \N$. 
The hypergeometric function  ${}_rF_s$ is defined by the series
\begin{equation}
   {}_rF_s\l(a_1,\cdots,a_r;b_1,\cdots b_s; z  \r):=\sum\limits_{k=0}^\infty \frac{(a_1,\cdots,a_r)_k}{(b_1,\cdots, b_s)_k} \frac{z^k}{k!}, 
\end{equation}
where
\begin{equation*}
 (a_1\cdots,a_r)_k:=(a_1)_k\cdots(a_r)_k.
\end{equation*}
The parameters must be such that the denominator factors in the terms of the series are never zero. When one of the numerator parameters $a_i$ equals $-n$, 
where $n$ is a nonnegative integer, this hypergeometric function is a polynomial in $z$. 

\bibliographystyle{plain}
\bibliography{literatur}

\end{document}